\documentclass[11pt,a4paper]{amsart}
\usepackage[foot]{amsaddr}
\usepackage{amsmath,amssymb,amsthm}
\usepackage{mathrsfs}
\usepackage{tikz-cd}
\usepackage{enumitem}
\usepackage[utf8]{inputenc}
\usepackage{hyperref}
\usepackage{cleveref}
\usepackage{float}

\newcommand{\Syl}{\operatorname{Syl}}
\newcommand{\Enc}{\operatorname{Enc}}

\theoremstyle{definition}
\newtheorem{definition}{Definition}[section]
\newtheorem{question}[definition]{Question}
\newtheorem{example}[definition]{Example}

\theoremstyle{plain}
\newtheorem{theorem}[definition]{Theorem}
\newtheorem{proposition}[definition]{Proposition}
\newtheorem{lemma}[definition]{Lemma}
\newtheorem{corollary}[definition]{Corollary}

\theoremstyle{remark}

\numberwithin{equation}{section}

\title{Tree Bricks and Finite Tree Automata}
\author{Annoy Sengupta}
\address{}
\email{sengupta.annoy44@gmail.com}
\date{}
\keywords{zero-relation algebra, tree module, brick, tree automaton, regular tree language}
\subjclass[2020]{Primary 16G20; Secondary 68Q45}

\begin{document}

\begin{abstract}
Let $\Lambda=KQ/I$ be a finite-dimensional zero-relation algebra. We encode Crawley--Boevey tree modules over $\Lambda$ by finite rooted trees labelled by arrows of $Q$ and their formal inverses, and construct a deterministic finite bottom-up tree automaton recognizing exactly these encodings. We define an accepted tree to be an automata-induced tree brick when it has no non-trivial factor--image self-overlap, and use Crawley--Boevey's graph-map basis to prove that this is equivalent to brickness of the associated tree module. We also introduce local colourings of $Q_1$ and show that the arrow alphabet can be compressed without changing the tree data, graph maps, or brick property. The optimal number of colours for such a compression is the maximum of the in-degree and out-degree of $Q$. We conclude by asking whether the tree language consisting only of bricks is regular.
\end{abstract}

\maketitle

\section{Introduction}
\label{sec:introduction}

Let $K$ be an algebraically closed field and let $\Lambda=KQ/I$ be a finite-dimensional zero-relation algebra. Crawley--Boevey introduced tree modules over zero-relation algebras in \cite{CB89}. Such a module is obtained from a finite tree $T$ and a bound quiver morphism $F:T\to Q$ satisfying a local injectivity condition. Tree modules are indecomposable, a fact essentially coming from Gabriel's universal-covering methods \cite{Gab81}, and Crawley--Boevey \cite{CB89} gave an explicit combinatorial basis for the Hom-space between two tree modules. The basis elements, called graph maps, are controlled by matching a factor subtree in the source with an image subtree in the target. In particular, the endomorphism algebra of a tree module is determined by self-overlaps of this kind.

Tree modules are particularly well suited to an automata-theoretic treatment because both their construction and their morphisms are governed by finite combinatorial data. This raises a natural structural question: can the class of tree modules, together with the combinatorics relevant to their endomorphisms, be organized within a finite-state framework? The purpose of this paper is to show that the answer is affirmative at the level of tree data and to use this framework to formulate brickness directly on the accepted trees.

There is a well-established interaction between automata theory and representation theory of bound quivers. Rees \cite{Rees08} showed that the strings of a monomial algebra form a locally testable, hence regular, language and described a finite automaton recognizing them. Srivastava and Kuber \cite{SrivastavaKuber2025} used finite automata to obtain algorithmic descriptions of the linear orders arising from hammocks of special biserial algebras, providing automata-theoretic proofs of key ingredients in the computation of their stable rank. More recently, Kuber and Sengupta introduced multi-entry inverse automata for string algebras and translated the factor--image criterion for string bricks into the language of accepted pointed words; see \cite[Theorems~5.6 and~5.10]{KuberSengupta2026}. Their construction also admits an alphabet reduction through local bijections \cite[Definition~4.11 and Corollary~4.13]{KuberSengupta2026}.

For tree modules the underlying combinatorial object is no longer a word but a branching tree, so finite word automata are replaced naturally by finite tree automata. The foundations of finite tree automata go back to Thatcher--Wright \cite{ThatcherWright68} and Doner \cite{Doner70}; we use the standard bottom-up formalism of \cite{TATA}. We first choose a root in $T$ and record each edge by an arrow of $Q$ or its formal inverse, according to its direction relative to the root. The resulting rooted labelled tree determines the original tree datum uniquely, up to the harmless choice of root. We then construct in \S~\ref{subsec:the automaton} a deterministic finite bottom-up automaton recognizing exactly the rooted encodings of tree data. The only non-local issue is the avoidance of the zero-relations. Since the defining relations have bounded length, this can be checked with finite memory by storing bounded incoming and outgoing path profiles at the root of each processed subtree.

We next introduce (\Cref{def:automata-tree-brick}) a notion of brickness directly on the trees accepted by $\mathcal A_\Lambda$. In the rooted syllable language, factor and image subtrees are detected by the orientations of their boundary syllables, and a non-trivial factor--image self-overlap is given by two such subtrees carrying the same typed syllable-labelled tree structure. We call an accepted tree with no such non-trivial self-overlap an \emph{automata-induced tree brick}. Using Crawley--Boevey's graph-map basis, we then show (\Cref{thm:brick-criterion}) that this automata-theoretic notion agrees exactly with the usual representation-theoretic one: an accepted tree is an automata-induced tree brick if and only if the associated Crawley--Boevey tree module has endomorphism ring $K$. Thus the automaton provides a branching counterpart of the factor--image description of brickness familiar from string modules.

Finally, we show (\Cref{thm:optimal-colouring}) that the full alphabet $Q_1\sqcup Q_1^{-1}$ is larger than necessary. A local colouring is a colouring of $Q_1$ which is injective separately on arrows with a common source and on arrows with a common target. Because the vertex type is retained in our encoding, the type together with the signed colour uniquely recovers the original arrow. Thus local colourings preserve the entire tree datum, and consequently preserve graph maps and brickness. By translating local colourings into edge-colourings of a bipartite multigraph and applying K\"onig's line-colouring theorem, we determine the optimal number of colours. We end with one question (Question \ref{que:regular-bricks}) left open by the paper: whether the subclass of accepted trees corresponding to bricks is itself a regular tree language.

The paper is organized as follows. In \S~\ref{sec:tree-data} we recall tree modules and encode tree data by rooted syllable trees. In \S~\ref{sec:automaton} we construct the finite tree automaton and prove the recognition theorem. In  \S~\ref{sec:bricks} we introduce automata-induced tree bricks and prove their equivalence with brick Crawley--Boevey tree modules, together with explicit brick and non-brick examples. In \S~\ref{sec:compression} we develop alphabet compression and determine the optimal colour alphabet. In \S~\ref{sec:regularity} we formulate the regularity problem for the brick language.

\section{Tree data and rooted syllable trees}
\label{sec:tree-data}

Throughout the paper, $Q=(Q_0,Q_1,s,t)$ is a finite quiver. Let $J\subset KQ$ be the arrow ideal and let $\rho$ be a finite set of paths of length at least two such that $I:=\langle\rho\rangle$ is admissible. Thus $J^N\subseteq I\subseteq J^2$ for some $N\geq 2$, and $\Lambda:=KQ/I$ is a finite-dimensional zero-relation algebra. We write a path as $p=\alpha_m\cdots\alpha_1$, where $\alpha_1$ is traversed first; its length is denoted by $|p|$. A path belongs to $I$ precisely when it contains an element of $\rho$ as a consecutive subpath.

A \emph{tree} is a finite quiver whose underlying undirected graph is a tree. If $T$ is a tree, let $V_T$ be the representation of $T$ which is $K$ at every vertex and whose arrow maps are all $1_K$.

\begin{definition}\label{def:tree-datum}
A \emph{$\Lambda$-tree datum} is a pair $(T,F)$ consisting of a tree $T$ and a quiver morphism $F:T\to Q$ satisfying:
\begin{enumerate}[label=\textup{(\roman*)}]
\item no path $p$ in $T$ satisfies $F(p)\in I$;
\item if $a,b\in T_1$ are distinct and have a common source or a common target, then $F(a)\neq F(b)$.
\end{enumerate}
The associated \emph{tree module} is $M(T,F):=F_\lambda(V_T)$.
\end{definition}

Explicitly, $M(T,F)_i=\bigoplus_{F(x)=i}Kv_x$ for $i\in Q_0$, and for $\alpha:i\to j$ one has
\[
M(T,F)_\alpha(v_x)=\begin{cases}v_y,&\text{if there is an arrow }a:x\to y\text{ in }T\text{ with }F(a)=\alpha,\\0,&\text{otherwise.}\end{cases}
\]
Condition (ii) makes the displayed formula unambiguous. This is Crawley--Boevey's definition \cite[\S 1]{CB89}. We will use his description of Hom-spaces in \S~\ref{sec:bricks}.

\subsection{Rooted syllable encoding}

For each $\alpha\in Q_1$, introduce a formal inverse $\alpha^{-1}$ and set $Q_1^{\pm}:=Q_1\sqcup Q_1^{-1}$. Define $s(\alpha^{-1})=t(\alpha)$, $t(\alpha^{-1})=s(\alpha)$ and $(\alpha^{-1})^{-1}=\alpha$. For $i\in Q_0$ put
\[
\Syl(i):=\{\gamma\in Q_1^{\pm}\mid s(\gamma)=i\}.
\]

Let $(T,F)$ be a tree datum. For adjacent vertices $x,y\in T_0$, define the intrinsic syllable $\lambda_F(x,y)\in Q_1^{\pm}$ by
\[
\lambda_F(x,y)=\begin{cases}F(a),&a:x\to y,\\F(a)^{-1},&a:y\to x.\end{cases}
\]
Then $\lambda_F(y,x)=\lambda_F(x,y)^{-1}$, $s(\lambda_F(x,y))=F(x)$ and $t(\lambda_F(x,y))=F(y)$.

Choose a root $r\in T_0$. Every edge now has a unique parent--child orientation. We record the underlying rooted undirected tree, the type map $F:T_0\to Q_0$, and on every parent--child edge $(x,y)$ the label $\lambda_F(x,y)$.

It is convenient to describe directly the rooted objects which arise in this way. Let $S$ be a finite rooted undirected tree with root $r$, let $\tau:S_0\to Q_0$ be a type map, and label each parent--child edge $(x,y)$ by a syllable $\ell(x,y)\in Q_1^{\pm}$. Extend the labelling to both orientations by $\widetilde\ell(y,x)=\widetilde\ell(x,y)^{-1}$.

\begin{definition}\label{def:admissible-syllable-tree}
The rooted syllable tree $\mathcal S=(S,r,\tau,\ell)$ is \emph{$\Lambda$-admissible} if:
\begin{enumerate}[label=\textup{(\roman*)}]
\item $\mathcal S$ is \emph{type compatible}, i.e. for every parent--child edge $(x,y)$, $s(\ell(x,y))=\tau(x)$ and $t(\ell(x,y))=\tau(y)$;
\item $\mathcal S$ is \emph{locally injective}, i.e. for every $x\in S_0$, the map $N_S(x)\to Q_1^{\pm}$, $y\mapsto\widetilde\ell(x,y)$, is injective;
\item $\mathcal S$ is  \emph{relation admissible}, i.e. no simple path $(x_0,\ldots,x_m)$ for which all $\widetilde\ell(x_{j-1},x_j)$ lie in $Q_1$ has $Q$-label in $I$.
\end{enumerate}
\end{definition}

Condition (ii) is exactly the tree-module condition in Definition~\ref{def:tree-datum}: direct syllables at $x$ correspond to arrows of $T$ leaving $x$, while inverse syllables correspond to arrows entering $x$.

\begin{theorem}[Rooted encoding]\label{thm:rooted-encoding}
Rooted $\Lambda$-tree data $(T,F,r)$ are in bijection with $\Lambda$-admissible rooted syllable trees. Under this bijection the underlying undirected tree and the type map are unchanged.
\end{theorem}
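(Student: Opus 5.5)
We will define two explicit maps and check that they are mutually inverse. The forward map $\Phi$ sends a rooted tree datum $(T,F,r)$ to $\mathcal S=(S,r,\tau,\ell)$. Here $S$ is the underlying undirected tree of $T$ rooted at $r$, $\tau=F|_{T_0}$, and $\ell(x,y)=\lambda_F(x,y)$ on each parent--child edge. The reverse map $\Psi$ sends an admissible $\mathcal S$ to a quiver $T$ with $T_0=S_0$. For each parent--child edge $(x,y)$ it puts one arrow $a:x\to y$ with $F(a)=\ell(x,y)$ if $\ell(x,y)\in Q_1$, and one arrow $a:y\to x$ with $F(a)=\ell(x,y)^{-1}$ if $\ell(x,y)\in Q_1^{-1}$. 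It also sets $F|_{T_0}=\tau$ and keeps the root $r$. By construction neither map changes the underlying undirected tree or the type map, which gives the last sentence of the theorem.

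\textbf{Step 1: $\Phi$ lands in admissible syllable trees.} Type compatibility follows from the identities $s(\lambda_F(x,y))=F(x)$ and $t(\lambda_F(x,y))=F(y)$ recorded above, which hold because $F$ is a quiver morphism. Since $\widetilde\ell=\lambda_F$ in both orientations, local injectivity at $x$ is the remark after Definition~\ref{def:admissible-syllable-tree}. Direct syllables at $x$ are $F$-images of arrows leaving $x$, and inverse syllables are inverses of $F$-images of arrows entering $x$. A direct syllable never equals an inverse one, so injectivity of $y\mapsto\lambda_F(x,y)$ is exactly Definition~\ref{def:tree-datum}(ii) for arrows with common source $x$ and for arrows with common target $x$.

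\textbf{Step 2: relation admissibility matches Definition~\ref{def:tree-datum}(i).} A path in the quiver $T$ is a sequence of arrows $a_1,\dots,a_m$ with $t(a_j)=s(a_{j+1})$. Because $T$ is a tree, every such path visits distinct vertices. Otherwise the path would contain a closed walk of positive length whose edges are oriented consecutively, and in a tree that forces some edge to be traversed back and forth, which is impossible for a directed path. Such paths therefore correspond exactly to simple paths $(x_0,\dots,x_m)$ with all $\widetilde\ell(x_{j-1},x_j)=F(a_j)\in Q_1$, and the $Q$-label of the simple path is $F(p)$. Hence (i) in the two definitions are equivalent. Trivial paths, those of length $0$, cause no trouble, since $I\subseteq J^2$.

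\textbf{Step 3: $\Psi$ lands in tree data, and the two maps are inverse.} For $\Psi$, the type compatibility of $\mathcal S$ shows that $F$ is a quiver morphism. Specifically, if $\ell(x,y)=\alpha^{-1}$ then the arrow $a:y\to x$ has $s(\alpha)=t(\alpha^{-1})=\tau(y)$ and $t(\alpha)=\tau(x)$. One checks directly that $\lambda_F=\widetilde\ell$ for $\Psi(\mathcal S)$, so Steps 1 and 2, read backwards, give conditions (i) and (ii) of Definition~\ref{def:tree-datum}. Next, $\Phi\Psi=\mathrm{id}$ is immediate from $\lambda_F=\widetilde\ell$. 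For $\Psi\Phi=\mathrm{id}$, note that the label $\lambda_F(x,y)$ records both the orientation of the unique arrow between $x$ and $y$ and its image under $F$. This uses the fact that $T$ has no multiple edges, since it is a tree.

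The main point needing care is Step 2. We must justify that paths in a tree quiver are automatically simple, and that the sign bookkeeping of $\widetilde\ell$ along a path agrees with the parent--child labels for every orientation of the path relative to the root. A path may go up toward the root and then down, and in the upward steps we use $\widetilde\ell(y,x)=\ell(x,y)^{-1}$. Everything else is routine verification.
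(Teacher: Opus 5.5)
Your proposal is correct and follows the same route as the paper. Like the paper, you give explicit forward and reverse constructions, match local injectivity with condition (ii) of Definition~\ref{def:tree-datum}, and match relation admissibility with boundness by identifying directed paths in $T$ with simple paths whose syllables are all direct. You also fill in details the paper leaves implicit: that directed paths in a tree quiver are automatically simple, that trivial paths are harmless, and the orientation check showing that $F$ is a quiver morphism.
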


\begin{proof}
The construction from $(T,F,r)$ was given above. Conditions (i) and (ii) follow from the fact that $F$ is a quiver morphism satisfying the tree-module condition. A directed path in $T$ is precisely a simple path whose syllables, when traversed in the direction of the path, all lie in $Q_1$; hence boundness of $F$ is equivalent to condition (iii).

Conversely, let $\mathcal S=(S,r,\tau,\ell)$ be admissible. For every undirected edge $\{x,y\}$, exactly one of $\widetilde\ell(x,y)$ and $\widetilde\ell(y,x)$ lies in $Q_1$. Orient the edge from $x$ to $y$ when $\widetilde\ell(x,y)\in Q_1$, and label the resulting arrow by $\widetilde\ell(x,y)$. This produces a quiver $T$ and a quiver morphism $F:T\to Q$ with $F|_{T_0}=\tau$. Conditions (ii) and (iii) give respectively the tree-module condition and boundness. The two constructions are inverse.
\end{proof}

Changing the root does not change $\tau$ or the intrinsic labels $\widetilde\ell(x,y)$; it only changes which orientation of an edge is declared parent--child. We call this operation \emph{rerooting}. Theorem~\ref{thm:rooted-encoding} immediately gives the following unrooted form.

\begin{corollary}\label{cor:rerooting}
Isomorphism classes of $\Lambda$-tree data are in bijection with equivalence classes of $\Lambda$-admissible rooted syllable trees, where two rooted syllable trees are equivalent if one is isomorphic to a rerooting of the other.
\end{corollary}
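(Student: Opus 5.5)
We would derive the corollary from Theorem~\ref{thm:rooted-encoding} by passing to quotients on both sides. An isomorphism of $\Lambda$-tree data $(T,F)\cong(T',F')$ is a quiver isomorphism $\phi:T\to T'$ with $F'\circ\phi=F$. An isomorphism of rooted syllable trees is a root-preserving isomorphism of the underlying rooted trees that commutes with the type maps and the edge labels.

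The plan is to compare three sets.
\begin{enumerate}[label=\textup{(\alph*)}]
\item Rooted tree data $(T,F,r)$ modulo root-preserving isomorphism.
\item The same rooted tree data modulo all isomorphisms of tree data, with the root forgotten.
\item Admissible rooted syllable trees modulo isomorphism and rerooting.
\end{enumerate}

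First, we check that the bijection of Theorem~\ref{thm:rooted-encoding} is natural. A root-preserving isomorphism of tree data $\phi:(T,F,r)\to(T',F',r')$ preserves $F$ on vertices and arrows. Hence it preserves the intrinsic labels $\lambda_F(x,y)=\lambda_{F'}(\phi x,\phi y)$, and the same map is an isomorphism of the encoded syllable trees. Conversely, an isomorphism of syllable trees preserves $\widetilde\ell$. It therefore preserves the orientation rule ``$x\to y$ iff $\widetilde\ell(x,y)\in Q_1$'' and the arrow labels, so it is an isomorphism of the reconstructed tree data. Thus the theorem descends to a bijection between isomorphism classes of rooted tree data and isomorphism classes of admissible rooted syllable trees.

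Next, we show that rerooting on the syllable side corresponds to changing the root on the tree-datum side. Take the encoding $\mathcal S$ of $(T,F,r)$ and reroot it at $r'$. By the remark preceding the corollary, $\tau$ and $\widetilde\ell$ are unchanged. So the rerooted tree is exactly the encoding of $(T,F,r')$. Admissibility is also preserved, since conditions (i)--(iii) of Definition~\ref{def:admissible-syllable-tree} depend only on $\tau$ and $\widetilde\ell$. Here condition (i) is rewritten via $s(\widetilde\ell(x,y))=\tau(x)$, which holds for both orientations.

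Finally, we pass to the quotient. Two rooted data $(T,F,r)$ and $(T',F',r')$ have isomorphic underlying data exactly when some isomorphism $\phi$ exists. This happens exactly when $(T',F',\phi(r))\cong(T,F,r)$ as rooted data, i.e. exactly when $(T,F,r)$ is isomorphic to a rerooting of $(T',F',r')$. Transporting this through the two steps above shows that the equivalence relation ``isomorphic to a rerooting'' on syllable trees corresponds precisely to isomorphism of unrooted data. We still have to note that this relation is an equivalence relation: it is generated by isomorphisms and rerootings, and these two operations commute, since an isomorphism carries a rerooting at $v$ to a rerooting at the image of $v$.

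Surjectivity onto isomorphism classes of tree data holds because every finite tree $T$ is nonempty and so admits a root. The only step needing care is the naturality check, specifically that isomorphisms on the syllable side recover arrow orientations. I expect this to be immediate from the orientation rule in the proof of Theorem~\ref{thm:rooted-encoding}.
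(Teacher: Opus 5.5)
Your proposal is correct and follows the paper's route. The paper only remarks that rerooting changes neither $\tau$ nor $\widetilde\ell$ and then calls the corollary immediate from Theorem~\ref{thm:rooted-encoding}; you fill in the steps that remark leaves implicit, namely that the bijection respects isomorphisms, that rerooting the syllable tree is the same as re-rooting the tree datum, and how to pass to the quotients.
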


Fix from now on a total order $\prec$ on the finite set $Q_1^{\pm}$. In an admissible rooted syllable tree the labels from a vertex to its children are distinct, so we order the children increasingly by these labels. Hence every admissible rooted syllable tree becomes canonically an ordered rooted tree. Moreover, if
\[
D_Q:=\max_{i\in Q_0}|\Syl(i)|,
\]
then every vertex has degree at most $D_Q$. These two elementary observations allow us to work with finite ranked tree automata in the next section.

\section{A finite tree automaton for tree data}
\label{sec:automaton}

We use the standard bottom-up model of finite tree automata; see \cite{TATA}. A finite ranked alphabet is a finite disjoint union $\Sigma=\coprod_{m\geq 0}\Sigma_m$, where a symbol in $\Sigma_m$ has rank $m$. A deterministic finite bottom-up tree automaton is a tuple $(\mathcal Q,\Sigma,\delta,\mathcal F)$ with finite state set $\mathcal Q$, final states $\mathcal F\subseteq\mathcal Q$, and transition maps $\delta_m:\Sigma_m\times\mathcal Q^m\to\mathcal Q$. A finite $\Sigma$-tree is accepted when the state computed at its root lies in $\mathcal F$.

\subsection{The input alphabet and path profiles}

For $0\leq m\leq D_Q$, let $\Sigma_{\Lambda,m}$ consist of the symbols
\[
[i;\gamma_1,\ldots,\gamma_m]
\]
with $i\in Q_0$, $\gamma_j\in\Syl(i)$ and $\gamma_1\prec\cdots\prec\gamma_m$. Put $\Sigma_\Lambda:=\coprod_{m=0}^{D_Q}\Sigma_{\Lambda,m}$. The symbol records the type of a vertex and the ordered syllables leading to its children. Thus an admissible rooted syllable tree $\mathcal S$ determines a ranked tree $\Enc(\mathcal S)\in\mathsf T_{\Sigma_\Lambda}$.

The automaton must check two kinds of compatibility when child subtrees are attached to a new root: the type and local-injectivity conditions, and the absence of a relation in $\rho$. The latter is the only point requiring memory beyond the labels adjacent to the root.

Set
\[
L:=\max\bigl(\{|r|:r\in\rho\}\cup\{1\}\bigr).
\]
Let $\mathsf P_{<L}(Q)$ be the finite set of paths in $Q$ of length $<L$, including the trivial paths $e_i$. For $i\in Q_0$ write
\[
\mathsf P^{\mathrm{in}}_{<L}(i):=\{p\in\mathsf P_{<L}(Q):t(p)=i\},\qquad \mathsf P^{\mathrm{out}}_{<L}(i):=\{p\in\mathsf P_{<L}(Q):s(p)=i\}.
\]
For a rooted syllable tree $\mathcal S$ with root $r$ of type $i$, let $\mathsf I_{<L}(\mathcal S)$ be $\{e_i\}$ together with the labels of all non-trivial directed paths of length $<L$ ending at $r$, and define $\mathsf O_{<L}(\mathcal S)$ dually using directed paths starting at $r$. We use $\mathsf I_{\leq L}$ and $\mathsf O_{\leq L}$ with the evident meaning.

\begin{lemma}[Profiles at a new root]\label{lem:profiles}
Let $\mathcal S$ have root $r$ of type $i$, children $y_1,\ldots,y_m$, and edge labels $\gamma_j=\ell(r,y_j)$. Let $\mathcal S_j$ be the subtree rooted at $y_j$. Then
\[
\mathsf I_{\leq L}(\mathcal S)=\{e_i\}\cup\bigcup_{\gamma_j\in Q_1^{-1}}\{\gamma_j^{-1}p:p\in\mathsf I_{<L}(\mathcal S_j),\ |p|+1\leq L\},
\]
\[
\mathsf O_{\leq L}(\mathcal S)=\{e_i\}\cup\bigcup_{\gamma_j\in Q_1}\{p\gamma_j:p\in\mathsf O_{<L}(\mathcal S_j),\ |p|+1\leq L\}.
\]
If every $\mathcal S_j$ is relation admissible, then $\mathcal S$ is relation admissible if and only if there are no $r_0\in\rho$, $u\in\mathsf I_{\leq L}(\mathcal S)$ and $v\in\mathsf O_{\leq L}(\mathcal S)$ with $r_0=vu$.
\end{lemma}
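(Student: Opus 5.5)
The plan has two parts: first the profile recursion, then the relation criterion.

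For the profile formulas I will use the structure of directed paths in a rooted tree. A non-trivial directed path of length $k\leq L$ ending at $r$ corresponds to a simple path $(x_0,\ldots,x_k=r)$ whose syllables, read toward $r$, all lie in $Q_1$. Since $S$ is a tree and the path is simple, its last edge is $\{y_j,r\}$ for a unique child $y_j$. Because $r$ has no parent, the whole path $(x_0,\ldots,x_{k-1}=y_j)$ lies in $\mathcal S_j$. The last syllable is $\widetilde\ell(y_j,r)=\gamma_j^{-1}$, so this arrow exists exactly when $\gamma_j\in Q_1^{-1}$. The remaining part is a directed path of length $k-1<L$ ending at $y_j$, so its label $p$ lies in $\mathsf I_{<L}(\mathcal S_j)$, and the full label is $\gamma_j^{-1}p$. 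Conversely, every such pair $(j,p)$ with $|p|+1\leq L$ concatenates to a directed path ending at $r$. The case $p=e_{\tau(y_j)}$ gives the single arrow $\gamma_j^{-1}$. The formula for $\mathsf O_{\leq L}$ is the dual argument, with $\gamma_j\in Q_1$ and label $p\gamma_j$.

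For the relation criterion, I will first reduce to paths through the root. Every directed path in $\mathcal S$ either lies inside some $\mathcal S_j$ or passes through $r$, because a simple path in a tree that visits two different child subtrees must pass through $r$. Paths inside some $\mathcal S_j$ are harmless by hypothesis. Since a path lies in $I$ exactly when it contains some element of $\rho$ as a consecutive subpath, $\mathcal S$ fails to be relation admissible if and only if some directed path $\pi$ has a consecutive subpath $\sigma$ with label $r_0\in\rho$. By passing to $\sigma$, I may assume the offending path is itself labelled by $r_0$, with $|r_0|\leq L$. Relation admissibility of the $\mathcal S_j$ then forces $\sigma$ to pass through $r$.

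Now I split $\sigma$ at $r$ into an incoming part $u$ ending at $r$ and an outgoing part $v$ starting at $r$, so that $r_0=vu$. Each part has length at most $|r_0|\leq L$, and either may be trivial, in which case it equals $e_i$ because $r$ has type $i$. Hence $u\in\mathsf I_{\leq L}(\mathcal S)$ and $v\in\mathsf O_{\leq L}(\mathcal S)$.

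The converse is the step that needs care. Given $u\in\mathsf I_{\leq L}(\mathcal S)$ and $v\in\mathsf O_{\leq L}(\mathcal S)$ with $vu=r_0$, I must show that the concatenation is a genuine directed path in $\mathcal S$. The only possible failure is that the walk is not simple, that is, $u$ and $v$ share a vertex other than $r$. This cannot happen. If $u$ is non-trivial it enters $r$ from a child $y_j$ with $\gamma_j\in Q_1^{-1}$, while a non-trivial $v$ leaves $r$ to a child $y_k$ with $\gamma_k\in Q_1$. The orientations differ, so $j\neq k$, and the two parts lie in the disjoint subtrees $\mathcal S_j$ and $\mathcal S_k$. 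Therefore $vu$ is the label of a simple directed path, and that path lies in $I$, so $\mathcal S$ is not relation admissible. No further obstacle is expected.
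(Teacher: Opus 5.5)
Your proposal is correct and follows the paper's proof. The profile recursion comes from deleting the unique root edge of a directed path, and the relation criterion comes from reducing to an occurrence of a generator $r_0\in\rho$ passing through $r$ and splitting it as $r_0=vu$; your observation that the incoming and outgoing branches use different children (since $\gamma_j\in Q_1^{-1}$ while $\gamma_k\in Q_1$) spells out the paper's remark that the two branches meet only at $r$.
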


\begin{proof}
A non-trivial directed path ending at $r$ must use, as its last edge, a unique child edge labelled $\gamma_j^{-1}$; deleting that edge leaves a directed path ending at $y_j$. This gives the first formula. The second is dual. Now assume the child subtrees contain no relation. Any occurrence of a generator $r_0\in\rho$ in $\mathcal S$ which is not contained in one child subtree must pass through $r$. Splitting it at $r$ gives an incoming part $u$ and an outgoing part $v$, with $r_0=vu$. Conversely, such a pair $u,v$ is realized on two branches which meet only at $r$ (unless one part is trivial), and hence gives a directed simple path labelled by $r_0$. Since $I$ is generated by $\rho$, avoiding the generators is equivalent to avoiding all paths in $I$.
\end{proof}

\subsection{The automaton}
\label{subsec:the automaton}
For $i\in Q_0$, let $\mathcal Q_i$ consist of all tuples $(i,C,\mathsf I,\mathsf O)$ with $C\subseteq\Syl(i)$, $\mathsf I\subseteq\mathsf P^{\mathrm{in}}_{<L}(i)$, $\mathsf O\subseteq\mathsf P^{\mathrm{out}}_{<L}(i)$, and $e_i\in\mathsf I\cap\mathsf O$. Add a failure state $\bot$ and set $\mathcal Q_\Lambda:=\{\bot\}\cup\bigcup_{i\in Q_0}\mathcal Q_i$. The component $C$ records the syllables from the root to its children; it is needed when this subtree is later attached to a parent.

Let $\sigma=[i;\gamma_1,\ldots,\gamma_m]$ and suppose the child states are $q_j=(i_j,C_j,\mathsf I_j,\mathsf O_j)$. If some child state is $\bot$, if $i_j\neq t(\gamma_j)$ for some $j$, or if $\gamma_j^{-1}\in C_j$ for some $j$, define the transition to be $\bot$. Now check for type compatibility and local injectivity at the child root after the parent edge is attached. Assume these tests pass and define
\begin{align*}
\widehat{\mathsf I}&:=\{e_i\}\cup\bigcup_{\gamma_j\in Q_1^{-1}}\{\gamma_j^{-1}p:p\in\mathsf I_j,\ |p|+1\leq L\},\\
\widehat{\mathsf O}&:=\{e_i\}\cup\bigcup_{\gamma_j\in Q_1}\{p\gamma_j:p\in\mathsf O_j,\ |p|+1\leq L\}.
\end{align*}
If there exist $r_0\in\rho$, $u\in\widehat{\mathsf I}$ and $v\in\widehat{\mathsf O}$ with $r_0=vu$, again output $\bot$. Otherwise set
\[
\delta_m(\sigma,q_1,\ldots,q_m):=\bigl(i,\{\gamma_1,\ldots,\gamma_m\},\widehat{\mathsf I}\cap\mathsf P_{<L}(Q),\widehat{\mathsf O}\cap\mathsf P_{<L}(Q)\bigr).
\]
For $m=0$ this gives $\delta_0([i;])=(i,\varnothing,\{e_i\},\{e_i\})$. Finally put $\mathcal F_\Lambda:=\mathcal Q_\Lambda\setminus\{\bot\}$ and
\[
\mathcal A_\Lambda:=(\mathcal Q_\Lambda,\Sigma_\Lambda,\delta,\mathcal F_\Lambda).
\]
All sets involved are finite, so $\mathcal A_\Lambda$ is a deterministic finite bottom-up tree automaton.

\begin{theorem}[Recognition theorem]\label{thm:recognition}
The language $L(\mathcal A_\Lambda)$ consists exactly of the ranked encodings $\Enc(\mathcal S)$ of $\Lambda$-admissible rooted syllable trees. More precisely, if $\mathcal S$ has root of type $i$, then the state at its root is
\[
\bigl(i,C(\mathcal S),\mathsf I_{<L}(\mathcal S),\mathsf O_{<L}(\mathcal S)\bigr),
\]
where $C(\mathcal S)$ is the set of labels from the root to its children.
\end{theorem}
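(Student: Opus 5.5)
We argue by structural induction on the height of a ranked $\Sigma_\Lambda$-tree $t$, proving two statements simultaneously. The first is that the state computed at the root of $t$ is different from $\bot$ if and only if $t=\Enc(\mathcal S)$ for some $\Lambda$-admissible rooted syllable tree $\mathcal S$. The second is that in this case the state equals $\bigl(i,C(\mathcal S),\mathsf I_{<L}(\mathcal S),\mathsf O_{<L}(\mathcal S)\bigr)$. We also note that any ranked tree $t$ determines a rooted syllable tree $\mathcal S(t)$: the vertex types are read from the symbols, and the edge labels are the listed syllables. Type compatibility at the parent end and the strict ordering of the syllables are then automatic. So we only need to compare the transition checks with admissibility of $\mathcal S(t)$.

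The base case is a leaf $[i;]$. Here $\delta_0$ gives $(i,\varnothing,\{e_i\},\{e_i\})$, and the one-vertex tree is trivially admissible. Its profiles consist of $e_i$ alone.

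For the inductive step, let $t=\sigma(t_1,\ldots,t_m)$ with $\sigma=[i;\gamma_1,\ldots,\gamma_m]$. If some $t_j$ evaluates to $\bot$, then by induction $\mathcal S(t_j)$ is not admissible. Any failure of (i)--(iii) inside a subtree persists in $\mathcal S(t)$, since the conditions are inherited by rooted subtrees, so $\mathcal S(t)$ is not admissible either. Otherwise each child state has the inductive form, and we compare the tests with admissibility of $\mathcal S(t)$ as follows.
\begin{enumerate}[label=\textup{(\alph*)}]
\item The test $i_j=t(\gamma_j)$ is type compatibility on the new edges. The condition $s(\gamma_j)=i$ holds by the definition of $\Sigma_\Lambda$.
\item Local injectivity at the new root holds because the $\gamma_j$ are strictly increasing.
\item Local injectivity at $y_j$ can fail only through the new neighbour $r$, whose label seen from $y_j$ is $\gamma_j^{-1}$. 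So it is exactly the test $\gamma_j^{-1}\notin C_j=C(\mathcal S_j)$.
\item Relation admissibility is handled by Lemma~\ref{lem:profiles}. Since all $\mathcal S_j$ are relation admissible, and by induction $\mathsf I_j=\mathsf I_{<L}(\mathcal S_j)$ and $\mathsf O_j=\mathsf O_{<L}(\mathcal S_j)$, the sets $\widehat{\mathsf I}$ and $\widehat{\mathsf O}$ coincide with $\mathsf I_{\le L}(\mathcal S)$ and $\mathsf O_{\le L}(\mathcal S)$. The relation test is then precisely the criterion of the lemma.
\end{enumerate}
Hence the output is non-$\bot$ exactly when $\mathcal S(t)$ is admissible. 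Intersecting with $\mathsf P_{<L}(Q)$ turns $\mathsf I_{\le L}$ into $\mathsf I_{<L}$, and likewise for $\mathsf O$. The $C$-component is $\{\gamma_1,\ldots,\gamma_m\}=C(\mathcal S)$, which completes the induction.

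We expect the main obstacle to be the truncation bookkeeping in step (d). One must check that storing only paths of length $<L$ in the child states loses nothing. Every relation has length $\le L$, so in a decomposition $r_0=vu$ at the new root with $u$ non-trivial, $u$ has the form $\gamma_j^{-1}p$ with $|p|\le L-1$. Thus $p\in\mathsf I_{<L}(\mathcal S_j)$, and the same argument applies to $v$. A second point needs care: inside a single branch, an incoming path through $y_j$ uses an inverse syllable, while an outgoing path uses a direct syllable at a different child edge. Because of this the pairs $u,v$ in the lemma come from distinct branches (or one of them is trivial), as already verified there. Finally, since every $\mathcal S$ equals $\mathcal S(\Enc(\mathcal S))$, the language is exactly $\{\Enc(\mathcal S)\}$ with $\mathcal S$ admissible.
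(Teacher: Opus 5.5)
Your proof is correct and follows essentially the same route as the paper. Both argue by induction on height and match the transition tests one by one with admissibility: type compatibility, local injectivity at the new root and at each child root via $\gamma_j^{-1}\notin C_j$, and relation admissibility via Lemma~\ref{lem:profiles}, after which the truncated profiles give the state. Your single ``non-$\bot$ iff $\mathcal S(t)$ admissible'' statement, with failures inherited from subtrees, is just the contrapositive of the paper's separate converse direction, and your truncation remark spells out a point the paper leaves implicit.
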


\begin{proof}
We proceed by induction on the height of the input tree. If the input tree consists of a single vertex labelled by $[i;]$, then the transition produces the state $(i,\varnothing,\{e_i\},\{e_i\}),$ which is exactly the state attached to the one-vertex $\Lambda$-admissible rooted syllable tree of type $i$. Thus the statement holds in height zero.

Now suppose the theorem holds for all trees of height strictly smaller than that of $\mathbf t=[i;\gamma_1,\ldots,\gamma_m](\mathbf t_1,\ldots,\mathbf t_m).$ Assume first that the transition at the root does not produce the failure state. By the induction hypothesis, each child tree $\mathbf t_j$ is the encoding of a $\Lambda$-admissible rooted syllable tree $\mathcal S_j$, and its state is $(i_j,C_j,\mathsf I_j,\mathsf O_j),$ where $i_j$ is the type of the root of $\mathcal S_j$, $C_j$ is the set of syllables on the edges from that root to its children, and $\mathsf I_j,\mathsf O_j$ are its incoming and outgoing profiles of length $<L$. Attach the roots of the trees $\mathcal S_1,\ldots,\mathcal S_m$ to a new root $r$ of type $i$, labelling the edge from $r$ to the root of $\mathcal S_j$ by $\gamma_j$. Since the transition is non-failing, one has $i_j=t(\gamma_j)$ for every $j$. Together with $s(\gamma_j)=i$, which is built into the symbol $[i;\gamma_1,\ldots,\gamma_m]$, this shows that the resulting rooted syllable tree is type compatible.

We next verify local injectivity. The strict ordering $\gamma_1\prec\cdots\prec\gamma_m$ implies that the syllables on the edges from the new root to its children are pairwise distinct, so local injectivity holds at the new root. At the root of the $j$-th child subtree, the newly attached parent edge is seen with syllable $\gamma_j^{-1}$. Since the transition requires $\gamma_j^{-1}\notin C_j,$ this new incident syllable is distinct from all syllables already occurring on edges from that child root to its own children. Local injectivity at all other vertices holds by the induction hypothesis. Hence the enlarged tree is locally injective.

By Lemma~\ref{lem:profiles}, the sets $\widehat{\mathsf I}$ and $\widehat{\mathsf O}$ computed in the transition are precisely the incoming and outgoing directed-path profiles of length at most $L$ at the new root. Since each child subtree is already relation admissible, any new forbidden relation must pass through the new root. Again by Lemma~\ref{lem:profiles}, such a relation occurs if and only if there exist $
r_0\in\rho,\ 
u\in\widehat{\mathsf I},\ 
v\in\widehat{\mathsf O}$ with $r_0=vu.$ The transition is assumed not to fail, so no such triple exists. Therefore the enlarged rooted syllable tree is relation-admissible, and hence $\Lambda$-admissible.

Finally, Lemma~\ref{lem:profiles} shows that, after discarding paths of length exactly $L$, the sets $
\mathsf I=\widehat{\mathsf I}\cap\mathsf P_{<L}(Q)$ and $
\mathsf O=\widehat{\mathsf O}\cap\mathsf P_{<L}(Q) $ are exactly the incoming and outgoing profiles of length $<L$ at the new root. The set $C=\{\gamma_1,\ldots,\gamma_m\}$ is precisely the set of child syllables at that root. Thus the state produced by the transition is the state asserted in the theorem.

Conversely, suppose that $\mathbf t$ is the encoding of a $\Lambda$-admissible rooted syllable tree. Each child subtree is again $\Lambda$-admissible, so by the induction hypothesis its state is non-failing and records the correct root type, child syllables, and truncated incoming and outgoing profiles. Type-compatibility gives $i_j=t(\gamma_j)$ for every $j$. Local injectivity at the root of the $j$-th child subtree implies $\gamma_j^{-1}\notin C_j,$ and the child syllables $\gamma_1,\ldots,\gamma_m$ are pairwise distinct. Finally, relation admissibility of the whole tree and Lemma~\ref{lem:profiles} imply that the obstruction condition $r_0=vu$ never occurs. Hence the transition does not fail. The same profile decomposition shows that the resulting state is exactly the one prescribed in the statement of the theorem.
\end{proof}

Combining Theorem~\ref{thm:recognition} with Corollary~\ref{cor:rerooting}, tree data are therefore represented by rerooting classes inside a regular tree language.

\begin{example}[Why incoming and outgoing profiles are needed]\label{ex:relation-profile}
Consider the quiver $Q$ and rooted tree $T$ displayed in Figure~\ref{fig:relation-profile}. Define $F(u)=1$, $F(x)=2$, $F(v)=3$, $F(w)=4$, $F(a)=\alpha$, $F(b)=\beta$ and $F(c)=\gamma$, and let $I=\langle\gamma\alpha\rangle$. Since $Q$ is acyclic, this is an admissible monomial ideal. Relative to the root $x$, the three child labels are $\alpha^{-1}$, $\beta^{-1}$ and $\gamma$. The $\alpha^{-1}$-branch contributes the incoming path $\alpha$, while the $\gamma$-branch contributes the outgoing path $\gamma$. At the root the automaton therefore detects $\gamma\alpha\in I$ and rejects the tree. Neither offending piece lies inside a single child subtree; the relation appears only after the branches are joined.
\end{example}

\begin{figure}[H]
\centering
\begin{minipage}{0.43\textwidth}
\centering
\begin{tikzcd}[column sep=large,row sep=large]
1 \arrow[r,"\alpha"] & 2 \arrow[r,"\gamma"] & 4 \\
3 \arrow[ur,"\beta"'] & &
\end{tikzcd}
\\
$Q$
\end{minipage}
\hfill
\begin{minipage}{0.43\textwidth}
\centering
\begin{tikzcd}[column sep=large,row sep=large]
u \arrow[r,"a"] & x \arrow[r,"c"] & w \\
v \arrow[ur,"b"'] & &
\end{tikzcd}
\\
$T$, rooted at $x$
\end{minipage}
\caption{A relation created by joining two branches at the root.}
\label{fig:relation-profile}
\end{figure}

\section{Automata-induced tree bricks}
\label{sec:bricks}

Theorem~\ref{thm:recognition} identifies tree data over $\Lambda$ with rooted labelled trees accepted by $\mathcal A_\Lambda$. We now define brickness directly on such an accepted tree. The definition uses only the vertex types, the intrinsic syllable labelling, and the orientations of boundary edges. We then show that it agrees exactly with brickness of the associated Crawley--Boevey tree module.

We stress that the terminology \emph{automata-induced tree brick} does not mean that the brick trees are already known to form a regular tree language. It means only that brickness is formulated intrinsically on the objects accepted by $\mathcal A_\Lambda$. Whether this subclass is itself regular will be considered in \S~\ref{sec:regularity}.

\subsection{Factor--image overlaps in an accepted tree}

Let $\mathbf t\in L(\mathcal A_\Lambda)$. By Theorem~\ref{thm:recognition}, $\mathbf t$ is the ranked encoding of a $\Lambda$-admissible rooted syllable tree
$
\mathcal S_{\mathbf t}=(S,r,\tau,\ell).
$
Let $\tau:S_0\to Q_0$ be its type map and let
$
\widetilde{\ell}:\{(x,y)\mid x,y\text{ are adjacent in }S\}\longrightarrow Q_1^{\pm}
$
be the intrinsic syllable labelling introduced in \S~\ref{sec:tree-data}. Recall that $\widetilde{\ell}(y,x)=\widetilde{\ell}(x,y)^{-1}$.

By a subtree $U\subseteq S$ we mean a non-empty connected induced subtree. Define its boundary, viewed from inside $U$, by
$
\partial_S U:=\{(x,y)\mid x\in U_0,\ y\notin U_0,\ \{x,y\}\text{ is an edge of }S\}.
$

\begin{definition}\label{def:automata-factor-image}
Let $\mathbf t\in L(\mathcal A_\Lambda)$ and let $U\subseteq S$ be a subtree.

\begin{enumerate}[label=\textup{(\roman*)}]
\item We call $U$ an \emph{automata-factor subtree} if $\widetilde{\ell}(x,y)\in Q_1$ for every $(x,y)\in\partial_S U$.
\item We call $U$ an \emph{automata-image subtree} if $\widetilde{\ell}(x,y)\in Q_1^{-1}$ for every $(x,y)\in\partial_S U$.
\end{enumerate}
\end{definition}

Thus every boundary edge of an automata-factor subtree is directed, in the quiver encoded by $\mathcal S_{\mathbf t}$, from the subtree towards its complement, whereas every boundary edge of an automata-image subtree is directed towards the subtree.

We next specify when two subtrees carry the same labelled-tree structure.

\begin{definition}\label{def:automata-labelled-isomorphism}
Let $U,V\subseteq S$ be subtrees. An isomorphism of underlying trees
$
\Phi:U\xrightarrow{\sim}V
$
is called \emph{automata-label-preserving} if
$
\tau(\Phi(x))=\tau(x)
$
for every $x\in U_0$, and
$
\widetilde{\ell}(\Phi(x),\Phi(y))
=
\widetilde{\ell}(x,y)
$
for every ordered pair of adjacent vertices $x,y\in U_0$.
\end{definition}

\begin{definition}\label{def:automata-self-overlap}
An \emph{automata-induced self-overlap} of $\mathbf t$ is a triple
$
(U,V,\Phi)
$
such that $U$ is an automata-factor subtree, $V$ is an automata-image subtree, and $\Phi:U\xrightarrow{\sim}V$ is automata-label-preserving.

The overlap $(S,S,\operatorname{id}_S)$ is called the \emph{trivial overlap}. We denote the set of automata-induced self-overlaps of $\mathbf t$ by
$\mathscr O_{\mathcal A}(\mathbf t).
$
\end{definition}

The whole tree has empty boundary and is therefore both an automata-factor and an automata-image subtree. Hence
$(S,S,\operatorname{id}_S)\in\mathscr O_{\mathcal A}(\mathbf t).$

\begin{definition}[Automata-induced tree brick]\label{def:automata-tree-brick}
An accepted tree $\mathbf t\in L(\mathcal A_\Lambda)$ is called an \emph{automata-induced tree brick} if
$\mathscr O_{\mathcal A}(\mathbf t)
=
\{(S,S,\operatorname{id}_S)\}.$
Equivalently, $\mathbf t$ is an automata-induced tree brick if it admits no non-trivial factor--image self-overlap.
\end{definition}

Although the definition is made after choosing a root, it is independent of that choice.

\begin{proposition}\label{prop:automata-brick-rerooting}
Let $\mathbf t$ and $\mathbf t'$ be the ranked encodings obtained from the same tree datum by choosing two different roots. Then $\mathbf t$ is an automata-induced tree brick if and only if $\mathbf t'$ is an automata-induced tree brick.
\end{proposition}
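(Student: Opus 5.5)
The plan is to show that every ingredient of Definition~\ref{def:automata-tree-brick} is determined by data that rerooting leaves unchanged, so that the set $\mathscr O_{\mathcal A}(\mathbf t)$ and its trivial element are literally the same for $\mathbf t$ and $\mathbf t'$. By Theorem~\ref{thm:recognition} and Corollary~\ref{cor:rerooting}, $\mathbf t$ and $\mathbf t'$ encode rooted syllable trees $\mathcal S_{\mathbf t}=(S,r,\tau,\ell)$ and $\mathcal S_{\mathbf t'}=(S,r',\tau,\ell')$ on the same underlying undirected tree $S$ with the same type map $\tau$; they differ only in the root. The first step is to check that the intrinsic labellings agree, i.e.\ $\widetilde\ell=\widetilde\ell'$ on all ordered adjacent pairs. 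Both come from the same tree datum $(T,F)$ via the construction before Theorem~\ref{thm:rooted-encoding}, so each equals $\lambda_F$. Concretely, if $(x,y)$ is parent--child for $r$ but child--parent for $r'$, then $\ell'(y,x)=\lambda_F(y,x)=\lambda_F(x,y)^{-1}$, whence $\widetilde\ell'(x,y)=\ell'(y,x)^{-1}=\lambda_F(x,y)=\widetilde\ell(x,y)$. Edges whose orientation does not change are immediate.

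Next I will observe that the notions entering the definition of self-overlaps use only $S$, $\tau$ and $\widetilde\ell$, and never the root. Subtrees (non-empty connected induced subtrees) and their boundaries $\partial_S U$ depend only on the undirected tree $S$. Whether $U$ is an automata-factor or automata-image subtree (Definition~\ref{def:automata-factor-image}) depends only on the values $\widetilde\ell(x,y)$ on $\partial_S U$. Whether an isomorphism $\Phi:U\to V$ is automata-label-preserving (Definition~\ref{def:automata-labelled-isomorphism}) depends only on $\tau$ and on $\widetilde\ell$ restricted to ordered adjacent pairs. Consequently $\mathscr O_{\mathcal A}(\mathbf t)=\mathscr O_{\mathcal A}(\mathbf t')$ as sets of triples $(U,V,\Phi)$ of subtrees of $S$. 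The trivial overlap $(S,S,\operatorname{id}_S)$ is also the same element in both, so one set equals the singleton exactly when the other does.

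The only delicate point, and really the main obstacle, is bookkeeping. The ranked encodings $\mathbf t,\mathbf t'$ are different trees, and children are reordered by $\prec$. I therefore have to be explicit that the identification is made through the underlying vertex set of $S$, or through the vertex set $T_0$ of the tree datum, rather than through positions in the ranked trees. If the two encodings are only given up to isomorphism, as in Corollary~\ref{cor:rerooting}, I will transport overlaps along the isomorphism of rooted syllable trees. Such an isomorphism preserves $\tau$ and $\widetilde\ell$, so it maps factor subtrees to factor subtrees, image subtrees to image subtrees, and label-preserving isomorphisms to label-preserving isomorphisms, by conjugation. It also sends the trivial overlap to the trivial overlap, which finishes the argument.
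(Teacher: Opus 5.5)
Your proposal is correct and follows the paper's own argument. Rerooting leaves $S$, $\tau$ and $\widetilde\ell$ unchanged, so the factor/image boundary conditions and the label-preserving condition coincide, and the two self-overlap sets correspond with trivial overlap matching trivial overlap. Your explicit check that $\widetilde\ell=\widetilde\ell'=\lambda_F$, and your remark that vertices should be identified through $S$ rather than through positions in the ranked trees, only spell out details the paper leaves implicit.
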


\begin{proof}
Rerooting changes which endpoint of an edge is regarded as its parent, but it does not change the type map $\tau$ or the intrinsic labelling $\widetilde{\ell}$. Consequently the boundary conditions in Definition~\ref{def:automata-factor-image} and the label-preserving condition in Definition~\ref{def:automata-labelled-isomorphism} are unchanged. Thus rerooting induces a canonical bijection between the two sets of self-overlaps.
\end{proof}

\subsection{Comparison with Crawley--Boevey graph maps}

Let $(T_{\mathbf t},F_{\mathbf t})$ be the tree datum reconstructed from $\mathcal S_{\mathbf t}$ by Theorem~\ref{thm:rooted-encoding}. Thus $T_{\mathbf t}$ has underlying undirected tree $S$, and an edge $\{x,y\}$ is oriented from $x$ to $y$ precisely when
$
\widetilde{\ell}(x,y)\in Q_1.
$
Moreover,
$
F_{\mathbf t}(x)=\tau(x),
$
and if $a:x\to y$ is the corresponding arrow of $T_{\mathbf t}$, then
$
F_{\mathbf t}(a)=\widetilde{\ell}(x,y).
$

Recall that Crawley--Boevey \cite[\S 2]{CB89} describes graph maps between two tree modules by a non-empty factor subtree of the source, an image subtree of the target, and a quiver isomorphism between them which is compatible with the maps to $Q$. These graph maps form a basis of the corresponding hom-space.

The following proposition gives the precise dictionary between these graph-map data and the automata-induced overlaps defined above.

\begin{proposition}[Automata--graph-map dictionary]\label{prop:automata-graph-map-dictionary}
Let $\mathbf t\in L(\mathcal A_\Lambda)$ and let $(T_{\mathbf t},F_{\mathbf t})$ be its reconstructed tree datum.

\begin{enumerate}[label=\textup{(\roman*)}]
\item A subtree $U\subseteq S$ is an automata-factor subtree if and only if the corresponding subtree of $T_{\mathbf t}$ is a factor subtree in the sense of Crawley--Boevey.
\item A subtree $V\subseteq S$ is an automata-image subtree if and only if the corresponding subtree of $T_{\mathbf t}$ is an image subtree in the sense of Crawley--Boevey.
\item An isomorphism $\Phi:U\xrightarrow{\sim}V$ is automata-label-preserving if and only if it is a quiver isomorphism satisfying
$
F_{\mathbf t}\circ\Phi=F_{\mathbf t}|_U.
$
\end{enumerate}

Consequently, $\mathscr O_{\mathcal A}(\mathbf t)$ is canonically in bijection with the set of Crawley--Boevey graph-map data from $(T_{\mathbf t},F_{\mathbf t})$ to itself.
\end{proposition}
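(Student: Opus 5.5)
The proof is a direct translation through the reconstruction of Theorem~\ref{thm:rooted-encoding}. I first recall Crawley--Boevey's definitions in the form needed. A subtree $U$ of $T_{\mathbf t}$ (connected, non-empty) is a \emph{factor subtree} if every arrow of $T_{\mathbf t}$ joining a vertex of $U$ to a vertex outside $U$ points out of $U$. Dually, it is an \emph{image subtree} if every such arrow points into $U$. A graph map datum is a triple $(U,V,\Phi)$ with $U$ a factor subtree of the source, $V$ an image subtree of the target, and $\Phi:U\to V$ a quiver isomorphism with $F\circ\Phi=F|_U$. Since $T_{\mathbf t}$ and $S$ have the same underlying undirected tree, subtrees of $S$ and of $T_{\mathbf t}$ correspond by the identity on vertex sets. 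The same holds for their boundaries $\partial_S U$.

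For (i) and (ii), I use the orientation rule recorded before the proposition: the edge $\{x,y\}$ is oriented $x\to y$ in $T_{\mathbf t}$ if and only if $\widetilde\ell(x,y)\in Q_1$. Equivalently, it is oriented $y\to x$ if and only if $\widetilde\ell(x,y)\in Q_1^{-1}$, because $\widetilde\ell(y,x)=\widetilde\ell(x,y)^{-1}$ and exactly one of the pair lies in $Q_1$. Applying this to each $(x,y)\in\partial_S U$ with $x\in U_0$, $y\notin U_0$ shows the following. All boundary syllables lie in $Q_1$ exactly when all boundary arrows leave $U$, which is the factor condition. All boundary syllables lie in $Q_1^{-1}$ exactly when all boundary arrows enter $U$, which is the image condition.

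For (iii), take a bijection $\Phi$ of underlying trees. Type preservation $\tau(\Phi(x))=\tau(x)$ is literally $F_{\mathbf t}\circ\Phi=F_{\mathbf t}$ on vertices.

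For the arrows, suppose $\widetilde\ell(\Phi x,\Phi y)=\widetilde\ell(x,y)$ for all adjacent $x,y$. If $a:x\to y$ is an arrow of $U$, then $\widetilde\ell(x,y)\in Q_1$. Hence $\widetilde\ell(\Phi x,\Phi y)\in Q_1$, so there is an arrow $\Phi x\to\Phi y$ in $V$, and $F$ of it equals $\widetilde\ell(x,y)=F(a)$. Thus $\Phi$ extends to a quiver isomorphism compatible with $F$; bijectivity on arrows follows from bijectivity on edges.

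Conversely, let $\Phi$ be a quiver isomorphism with $F\circ\Phi=F|_U$, and let $x,y$ be adjacent. If the arrow is $a:x\to y$, then its image goes $\Phi x\to\Phi y$ with the same label, so the syllables agree. If instead the arrow is $y\to x$, the same argument applied to the reversed pair gives agreement of the inverse syllables, hence of the syllables.

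The final bijection $\mathscr O_{\mathcal A}(\mathbf t)\leftrightarrow\{\text{graph-map data}\}$ is then $(U,V,\Phi)\mapsto(U,V,\Phi)$, combining (i)--(iii).

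The only point I expect needs care is conventions. First, I must check that Crawley--Boevey's factor and image subtrees are exactly connected full subtrees with the stated boundary orientation; in particular, they must be induced, matching our "connected induced subtree". Second, I must confirm that his isomorphism is a quiver isomorphism rather than merely a graph isomorphism. I would quote \cite[\S 2]{CB89} verbatim on these definitions and observe that "induced" is automatic for connected vertex subsets of a tree. Beyond that, every step is a tautology unwound through Theorem~\ref{thm:rooted-encoding}.
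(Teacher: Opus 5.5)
Your proposal is correct and follows the paper's proof essentially step for step. Parts (i) and (ii) come from reading the orientation rule $\widetilde\ell(x,y)\in Q_1 \Leftrightarrow x\to y$ off the boundary pairs, (iii) from transporting arrows and their $Q$-labels along $\Phi$ in each direction, and the final bijection is the identity on triples. The convention check you flag is also something the paper takes directly from Crawley--Boevey without further argument: that his factor and image subtrees are the full connected subtrees with the stated boundary orientation, and that his isomorphisms are quiver isomorphisms over $Q$.
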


\begin{proof}
Let $(x,y)\in\partial_SU$. By construction of $T_{\mathbf t}$, the condition $\widetilde{\ell}(x,y)\in Q_1$ means exactly that the boundary edge is oriented $x\to y$. Hence every boundary edge points out of $U$ if and only if no arrow of $T_{\mathbf t}$ enters $U$ from its complement. This is precisely the factor-subtree condition, proving \textup{(i)}. The same argument with $Q_1^{-1}$ shows that every boundary edge points into $V$ if and only if no arrow leaves $V$, proving \textup{(ii)}.

For \textup{(iii)}, suppose first that $\Phi$ is automata-label-preserving. If $a:x\to y$ is an arrow of $U$, then
$
\widetilde{\ell}(x,y)=F_{\mathbf t}(a)\in Q_1.
$
Since $\Phi$ preserves intrinsic syllables,
$
\widetilde{\ell}(\Phi(x),\Phi(y))
=
\widetilde{\ell}(x,y)\in Q_1.
$
Thus the edge joining $\Phi(x)$ and $\Phi(y)$ is oriented $\Phi(x)\to\Phi(y)$, so $\Phi$ is a quiver isomorphism. Moreover its arrow label is the same:
$
F_{\mathbf t}(\Phi(a))
=
\widetilde{\ell}(\Phi(x),\Phi(y))
=
\widetilde{\ell}(x,y)
=
F_{\mathbf t}(a).
$
The equality of vertex types gives the same compatibility on vertices. Hence $F_{\mathbf t}\circ\Phi=F_{\mathbf t}|_U$.

Conversely, if $\Phi$ is a quiver isomorphism satisfying $F_{\mathbf t}\circ\Phi=F_{\mathbf t}|_U$, then it preserves vertex types and the $Q$-label of every oriented edge. It therefore preserves the intrinsic syllable labelling, including the formal inverse labels obtained by traversing an edge in the opposite direction. Thus $\Phi$ is automata-label-preserving.

The final assertion follows immediately from \textup{(i)}--\textup{(iii)}.
\end{proof}

We can now identify the automata-induced notion of a tree brick with the representation-theoretic one.

\begin{theorem}[Automata--Crawley--Boevey brick correspondence]\label{thm:brick-criterion}
Let $\mathbf t\in L(\mathcal A_\Lambda)$, let $(T_{\mathbf t},F_{\mathbf t})$ be the associated tree datum, and let
$
M_{\mathbf t}:=M(T_{\mathbf t},F_{\mathbf t})
$
be the corresponding Crawley--Boevey tree module. Then the following are equivalent:
\begin{enumerate}[label=\textup{(\roman*)}]
\item $\mathbf t$ is an automata-induced tree brick;
\item the only Crawley--Boevey graph-map datum from $(T_{\mathbf t},F_{\mathbf t})$ to itself is
$
(T_{\mathbf t},T_{\mathbf t},\operatorname{id}_{T_{\mathbf t}});
$
\item $M_{\mathbf t}$ is a brick, that is,
$
\operatorname{End}_{\Lambda}(M_{\mathbf t})=K\,1_{M_{\mathbf t}}.
$
\end{enumerate}
Moreover,
$
\dim_K\operatorname{End}_{\Lambda}(M_{\mathbf t})
=
|\mathscr O_{\mathcal A}(\mathbf t)|.
$
\end{theorem}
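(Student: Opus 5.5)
The plan is to reduce everything to Crawley--Boevey's basis theorem via the dictionary of Proposition~\ref{prop:automata-graph-map-dictionary}. First I would establish the dimension formula. By \cite[\S 2]{CB89}, the graph maps from $M(T,F)$ to $M(T',F')$, one for each graph-map datum (non-empty factor subtree of $T$, image subtree of $T'$, compatible quiver isomorphism), form a $K$-basis of $\operatorname{Hom}_\Lambda(M(T,F),M(T',F'))$. Taking $(T,F)=(T',F')=(T_{\mathbf t},F_{\mathbf t})$, $\dim_K\operatorname{End}_\Lambda(M_{\mathbf t})$ equals the number of graph-map data from the tree datum to itself. Proposition~\ref{prop:automata-graph-map-dictionary} gives a canonical bijection between these data and $\mathscr O_{\mathcal A}(\mathbf t)$, which yields $\dim_K\operatorname{End}_\Lambda(M_{\mathbf t})=|\mathscr O_{\mathcal A}(\mathbf t)|$. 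I would check here that the subtrees used in the definition of $\mathscr O_{\mathcal A}(\mathbf t)$ (non-empty, connected, induced) are the same as Crawley--Boevey's, so that the bijection really is onto.

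Next I would prove (i)$\Leftrightarrow$(ii). The bijection of Proposition~\ref{prop:automata-graph-map-dictionary} is the identity on the underlying subtrees and isomorphisms, so it sends the trivial overlap $(S,S,\operatorname{id}_S)$ to $(T_{\mathbf t},T_{\mathbf t},\operatorname{id}_{T_{\mathbf t}})$. Hence $\mathscr O_{\mathcal A}(\mathbf t)$ is the singleton consisting of the trivial overlap exactly when the identity datum is the only graph-map datum.

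For (ii)$\Leftrightarrow$(iii), I would observe that the graph map attached to the identity datum is the identity on every basis vector $v_x$, so it equals $1_{M_{\mathbf t}}$. If (ii) holds, the basis of $\operatorname{End}_\Lambda(M_{\mathbf t})$ is $\{1_{M_{\mathbf t}}\}$, and therefore $\operatorname{End}_\Lambda(M_{\mathbf t})=K\,1_{M_{\mathbf t}}$. Conversely, if $\operatorname{End}_\Lambda(M_{\mathbf t})=K\,1_{M_{\mathbf t}}$, the dimension is $1$, so there is exactly one graph-map datum; since the identity datum is always one, it is the only one.

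The main obstacle is the precise invocation of Crawley--Boevey's theorem. One must make sure his basis is indexed by the data without further identification, i.e.\ that distinct data give distinct, linearly independent maps, so that the count is exact. One must also ensure the conventions on factor and image subtrees (which direction is ``factor'') match those of Proposition~\ref{prop:automata-graph-map-dictionary}. I would quote \cite[Theorem in \S 2]{CB89} directly for this and note that linear independence is part of his basis statement.
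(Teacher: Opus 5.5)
Your proposal is correct and follows essentially the same route as the paper: the dictionary of Proposition~\ref{prop:automata-graph-map-dictionary} gives (i)$\Leftrightarrow$(ii), with the trivial overlap matching the identity datum. Crawley--Boevey's graph-map basis theorem, together with the fact that the identity datum gives $1_{M_{\mathbf t}}$, then yields (ii)$\Leftrightarrow$(iii) and the dimension formula $\dim_K\operatorname{End}_\Lambda(M_{\mathbf t})=|\mathscr O_{\mathcal A}(\mathbf t)|$; proving the dimension formula first, as you do, is only a reordering of the same argument.
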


\begin{proof}
By Proposition~\ref{prop:automata-graph-map-dictionary}, automata-induced self-overlaps of $\mathbf t$ are in canonical bijection with Crawley--Boevey graph-map data from $(T_{\mathbf t},F_{\mathbf t})$ to itself, and the trivial overlap corresponds to $(T_{\mathbf t},T_{\mathbf t},\operatorname{id}_{T_{\mathbf t}})$. Hence \textup{(i)} and \textup{(ii)} are equivalent.

By Crawley--Boevey's graph-map basis theorem \cite[\S 2]{CB89}, the graph maps indexed by these data form a $K$-basis of $\operatorname{End}_{\Lambda}(M_{\mathbf t})$. The datum $(T_{\mathbf t},T_{\mathbf t},\operatorname{id}_{T_{\mathbf t}})$ gives the identity endomorphism. Therefore the endomorphism algebra is one-dimensional if and only if this is the only graph-map datum. This proves the equivalence of \textup{(ii)} and \textup{(iii)}, and the same basis theorem gives
$
\dim_K\operatorname{End}_{\Lambda}(M_{\mathbf t})
=
|\mathscr O_{\mathcal A}(\mathbf t)|.
$
\end{proof}

As a consequence, Definition~\ref{def:automata-tree-brick} is not a new class of modules: it is an automata-theoretic presentation of precisely the brick tree modules among Crawley--Boevey tree modules.

\begin{corollary}\label{cor:tree-datum-automata-brick}
Let $(T,F)$ be a $\Lambda$-tree datum and choose any root $r\in T_0$. Let $\mathcal S_r$ be the rooted syllable tree obtained by rooting $T$ at $r$. Set $\mathbf t_r:=\Enc(\mathcal S_r)$. Then
\[
M(T,F)\text{ is a brick}\quad\Longleftrightarrow\quad\mathbf t_r\text{ is an automata-induced tree brick}.
\]
In particular, the right-hand condition is independent of the chosen root.
\end{corollary}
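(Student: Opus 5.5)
The plan is to deduce the corollary directly from Theorem~\ref{thm:brick-criterion}, once we know that the tree datum reconstructed from $\mathbf t_r$ is the original one. Start with $(T,F)$ and a root $r$. The construction described before Theorem~\ref{thm:rooted-encoding} produces the rooted syllable tree $\mathcal S_r$, and that theorem says it is $\Lambda$-admissible. Theorem~\ref{thm:recognition} then gives $\mathbf t_r=\Enc(\mathcal S_r)\in L(\mathcal A_\Lambda)$. Consequently $\mathbf t_r$ is an accepted tree, so both Definition~\ref{def:automata-tree-brick} and Theorem~\ref{thm:brick-criterion} apply to it.

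The next step is to identify the reconstructed datum $(T_{\mathbf t_r},F_{\mathbf t_r})$ with $(T,F)$. The encoding and the reconstruction in Theorem~\ref{thm:rooted-encoding} are mutually inverse, so applying the reconstruction to $\mathcal S_r$ returns $(T,F,r)$. In detail:
\begin{itemize}
\item the underlying tree and the type map are unchanged;
\item an edge $\{x,y\}$ is oriented $x\to y$ exactly when $\lambda_F(x,y)\in Q_1$, which by the definition of $\lambda_F$ means there is an arrow $a:x\to y$ in $T$;
\item the label $F_{\mathbf t_r}(a)=\lambda_F(x,y)=F(a)$.
\end{itemize}
Hence $(T_{\mathbf t_r},F_{\mathbf t_r})=(T,F)$, and in particular $M_{\mathbf t_r}=M(T,F)$.

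Applying the equivalence (i)$\Leftrightarrow$(iii) of Theorem~\ref{thm:brick-criterion} to $\mathbf t_r$ now gives the displayed equivalence. The independence of the root follows in two ways. The left-hand side does not mention $r$ at all. Alternatively, one can quote Proposition~\ref{prop:automata-brick-rerooting} directly.

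There is no genuine obstacle here. The only point needing care is the bookkeeping step that the reconstruction really recovers $(T,F)$ itself, and not merely an isomorphic datum. Even an isomorphic datum would suffice, because isomorphic tree data give isomorphic tree modules and brickness is invariant under isomorphism.
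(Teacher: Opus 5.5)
Your proposal is correct and follows essentially the same route as the paper: reconstruct $(T,F)$ from $\mathbf t_r$ via Theorem~\ref{thm:rooted-encoding}, apply Theorem~\ref{thm:brick-criterion}, and obtain root-independence from Proposition~\ref{prop:automata-brick-rerooting} or simply from the root-free left-hand side. You also make explicit two points the paper leaves implicit: that $\mathbf t_r$ is accepted, by Theorem~\ref{thm:recognition}, and that recovering $(T,F)$ up to isomorphism is enough.
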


\begin{proof}
Theorem~\ref{thm:rooted-encoding} reconstructs $(T,F)$ from $\mathbf t_r$, so the equivalence follows from Theorem~\ref{thm:brick-criterion}. Independence of the root also follows from Proposition~\ref{prop:automata-brick-rerooting}.
\end{proof}

\begin{example}[An automata-induced brick and a non-brick]\label{ex:brick-nonbrick}
Let $Q$ be the oriented $2$-cycle
$
\begin{tikzcd}[column sep=huge]
1 \arrow[r,bend left=18,"\alpha"] & 2 \arrow[l,bend left=18,"\beta"]
\end{tikzcd}
$
and let $I=J^3=\langle\alpha\beta\alpha,\beta\alpha\beta\rangle$.

First consider the tree datum $T_2:\ x\xrightarrow{a}z,\ 
F(x)=1,\quad F(z)=2,\  F(a)=\alpha.$ Its ranked encoding is accepted by $\mathcal A_\Lambda$. The singleton $\{x\}$ is an automata-factor subtree and $\{z\}$ is an automata-image subtree, but they cannot be automata-label-preservingly isomorphic because their vertex types are different. There is therefore no non-trivial automata-induced self-overlap, and the accepted tree is an automata-induced tree brick. By Theorem~\ref{thm:brick-criterion}, $M(T_2,F)$ is a brick.

Now consider
$
T_3:\  x\xrightarrow{a}z\xrightarrow{b}y,
$
where $F(x)=F(y)=1$, $F(z)=2$, $F(a)=\alpha$ and $F(b)=\beta$. Since its only path of length two maps to $\beta\alpha\notin I$, this is again a tree datum and its encoding is accepted by $\mathcal A_\Lambda$. The singleton $\{x\}$ is an automata-factor subtree, while $\{y\}$ is an automata-image subtree. Since both vertices have type $1$, the map $x\mapsto y$ is automata-label-preserving. Thus
$
(\{x\},\{y\},x\mapsto y)
$
is a non-trivial automata-induced self-overlap. Hence the accepted tree is not an automata-induced tree brick, and Theorem~\ref{thm:brick-criterion} shows that $M(T_3,F)$ is not a brick. The corresponding graph endomorphism is
$$
v_x\longmapsto v_y,\qquad
v_z\longmapsto0,\qquad
v_y\longmapsto0.
$$
\end{example}
\section{Local colourings and alphabet compression}
\label{sec:compression}

The automaton of \S~\ref{sec:automaton} uses the full signed arrow alphabet $Q_1^{\pm}$. Inspired by the local-bijection construction for multi-entry inverse automata in \cite[Definition~4.11]{KuberSengupta2026}, we now show that the alphabet can be compressed while retaining all information relevant to tree data and bricks.

For $i\in Q_0$ put $Q_1^+(i)=\{\alpha:s(\alpha)=i\}$ and $Q_1^-(i)=\{\alpha:t(\alpha)=i\}$.

\begin{definition}\label{def:local-colouring}
Let $A$ be a finite set. A surjective map $c:Q_1\to A$ is a \emph{local colouring} if its restrictions to $Q_1^+(i)$ and to $Q_1^-(i)$ are injective for every $i\in Q_0$.
\end{definition}

Let $A^{-1}$ be a disjoint copy of $A$, put $A^{\pm}=A\sqcup A^{-1}$, and extend $c$ by $c^{\pm}(\alpha^{-1})=c(\alpha)^{-1}$. For $i\in Q_0$ set $\operatorname{Col}_c(i):=c^{\pm}(\Syl(i))$.

\begin{lemma}[Local reconstruction]\label{lem:local-reconstruction}
For every $i\in Q_0$, the restriction
\[
c_i^{\pm}:\Syl(i)\longrightarrow\operatorname{Col}_c(i)
\]
is a bijection.
\end{lemma}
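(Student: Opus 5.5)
Surjectivity of $c_i^{\pm}$ holds by definition, since $\operatorname{Col}_c(i)=c^{\pm}(\Syl(i))$. Everything therefore reduces to injectivity. The plan is to split $\Syl(i)$ into its direct and inverse parts and use that $c^{\pm}$ respects this splitting.

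First I will record the decomposition
\[
\Syl(i)=Q_1^+(i)\sqcup\{\alpha^{-1}\mid \alpha\in Q_1^-(i)\}.
\]
This follows from $s(\alpha^{-1})=t(\alpha)$: a direct syllable starting at $i$ is an arrow with source $i$, and an inverse syllable starting at $i$ is the inverse of an arrow with target $i$. By construction $c^{\pm}$ sends direct syllables into $A$ and inverse syllables into the disjoint copy $A^{-1}$. Hence two syllables with the same colour are either both direct or both inverse, and it suffices to prove injectivity on each part separately.

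On the direct part, $c^{\pm}$ restricts to $c|_{Q_1^+(i)}$, which is injective by Definition~\ref{def:local-colouring}. On the inverse part, suppose $c^{\pm}(\alpha^{-1})=c^{\pm}(\beta^{-1})$ with $\alpha,\beta\in Q_1^-(i)$. Then $c(\alpha)^{-1}=c(\beta)^{-1}$ in $A^{-1}$. Since $a\mapsto a^{-1}$ is a bijection $A\to A^{-1}$, this gives $c(\alpha)=c(\beta)$, and injectivity of $c|_{Q_1^-(i)}$ yields $\alpha=\beta$.

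No real obstacle is expected. The only point needing care is that $A^{-1}$ is a disjoint copy of $A$, so a direct and an inverse syllable can never collide. This is the one step where disjointness is essential, since the colouring alone gives no control across the two parts. The argument does not use the loop case $s(\alpha)=t(\alpha)$ in any special way, because the two parts are separated regardless.
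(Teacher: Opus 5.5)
Your proof is correct and follows essentially the same route as the paper. Surjectivity holds by definition of $\operatorname{Col}_c(i)$. Injectivity holds separately on direct syllables (common source $i$) and inverse syllables (common target $i$) by local injectivity of $c$, and the disjointness of $A$ and $A^{-1}$ rules out collisions between the two parts.
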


\begin{proof}
Surjectivity is immediate. If two direct syllables in $\Syl(i)$ have the same colour, they are arrows with common source $i$ and hence are equal by local injectivity. If two inverse syllables have the same colour, their underlying arrows have common target $i$ and are again equal. A direct and an inverse syllable cannot have the same signed colour because $A$ and $A^{-1}$ are disjoint.
\end{proof}

Write $\lambda_i=(c_i^{\pm})^{-1}$. A \emph{$c$-coloured rooted tree} consists of a finite rooted tree $S$, a type map $\tau:S_0\to Q_0$, and labels $\bar\ell(x,y)\in A^{\pm}$ on parent--child edges such that $\bar\ell(x,y)\in\operatorname{Col}_c(\tau(x))$ and
\[
t\bigl(\lambda_{\tau(x)}(\bar\ell(x,y))\bigr)=\tau(y).
\]
Its \emph{lift} replaces each colour $\bar\ell(x,y)$ by the unique syllable $\lambda_{\tau(x)}(\bar\ell(x,y))$. We call the coloured tree $\Lambda$-admissible if its lift is $\Lambda$-admissible in the sense of Definition~\ref{def:admissible-syllable-tree}. Conversely, compression of a rooted syllable tree simply applies $c^{\pm}$ to every edge label.

For a $c$-coloured rooted tree, extend the signed colour labelling to both orientations of every edge by inversion. We use the terms \emph{automata-factor subtree} and \emph{automata-image subtree} with the same boundary definitions as in Definition~\ref{def:automata-factor-image}, replacing $Q_1$ and $Q_1^{-1}$ by $A$ and $A^{-1}$, respectively. An isomorphism between two coloured subtrees is \emph{automata-label-preserving} if it preserves vertex types and intrinsic signed colours. An admissible $c$-coloured rooted tree is called an \emph{automata-induced tree brick} if its only factor--image self-overlap in this sense is the trivial one.

\begin{theorem}[Compression of automata-induced tree bricks]
\label{thm:compression}
Compression and lifting are mutually inverse bijections between $\Lambda$-admissible rooted syllable trees and $\Lambda$-admissible $c$-coloured rooted trees. They commute with rerooting. Moreover, compression preserves and reflects automata-factor subtrees, automata-image subtrees, and label-preserving isomorphisms between subtrees. Consequently, it induces a bijection between automata-induced self-overlaps before and after compression, and hence preserves and reflects automata-induced tree bricks.
\end{theorem}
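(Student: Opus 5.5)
The whole argument rests on Lemma~\ref{lem:local-reconstruction}. It says that, once the type $\tau(x)$ of a vertex is fixed, the signed colour $c^{\pm}(\gamma)$ and the syllable $\gamma\in\Syl(\tau(x))$ determine each other. The plan is to show first that compression and lifting are inverse on the level of labelled rooted trees with a fixed type map. All remaining assertions then reduce to the observation that the relevant notions depend on labels only through the pair (type, syllable), and that $c^{\pm}$ respects the sign $Q_1$ versus $Q_1^{-1}$ and commutes with inversion.

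\emph{Step 1: the bijection.} Let $\mathcal S=(S,r,\tau,\ell)$ be admissible. Type compatibility gives $\ell(x,y)\in\Syl(\tau(x))$. Hence $\bar\ell(x,y):=c^{\pm}(\ell(x,y))$ lies in $\operatorname{Col}_c(\tau(x))$, and $\lambda_{\tau(x)}(\bar\ell(x,y))=\ell(x,y)$ has target $\tau(y)$. So the compression is a $c$-coloured rooted tree whose lift is $\mathcal S$, and it is therefore $\Lambda$-admissible by definition. Conversely, the lift of an admissible coloured tree is admissible by definition. Compressing it returns the original colours, because $c^{\pm}\circ\lambda_i=\mathrm{id}$ on $\operatorname{Col}_c(i)$. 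Thus the two maps are mutually inverse.

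\emph{Step 2: intrinsic labels and rerooting.} For the reversed orientation one has $\widetilde{\bar\ell}(y,x)=\bar\ell(x,y)^{-1}=c^{\pm}(\ell(x,y)^{-1})=c^{\pm}(\widetilde\ell(y,x))$. Moreover, $\widetilde\ell(y,x)\in\Syl(\tau(y))$. Hence in both orientations $\widetilde{\bar\ell}(x,y)=c^{\pm}_{\tau(x)}(\widetilde\ell(x,y))$, where $c^{\pm}_{\tau(x)}$ is the bijection of Lemma~\ref{lem:local-reconstruction}. Rerooting changes neither $\tau$ nor the intrinsic labels, only which orientation is declared parent--child. It therefore commutes with compression, and with lifting as well, since lifting is the inverse of compression.

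\emph{Step 3: factor/image subtrees and isomorphisms.} The map $c^{\pm}$ sends $Q_1$ into $A$ and $Q_1^{-1}$ into $A^{-1}$, and these pairs are disjoint. Therefore $\widetilde\ell(x,y)\in Q_1$ if and only if $\widetilde{\bar\ell}(x,y)\in A$. It follows that a subtree $U$ is automata-factor (respectively image) before compression exactly when it is so after; the underlying tree and boundary $\partial_SU$ are unchanged.

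For an isomorphism $\Phi:U\to V$, first assume it preserves types. Then $\tau(\Phi(x))=\tau(x)$, so the same injective map $c^{\pm}_{\tau(x)}$ governs both sides, and $\widetilde\ell(\Phi(x),\Phi(y))=\widetilde\ell(x,y)$ holds if and only if the colours agree. Both notions of label preservation require type preservation, so they coincide. This matching of types is the one point needing care: the argument only works because types are retained, since a colour alone does not determine an arrow globally.

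It follows that the identity on triples $(U,V,\Phi)$ is a bijection between the overlap sets before and after compression. This bijection sends the trivial overlap to the trivial overlap, so it preserves and reflects automata-induced tree bricks. Combined with Theorem~\ref{thm:brick-criterion}, brickness of the coloured tree is equivalent to brickness of $M(T,F)$.
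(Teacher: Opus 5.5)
Your proof is correct and follows essentially the same route as the paper. Everything is reduced to Lemma~\ref{lem:local-reconstruction} (type plus signed colour determines the syllable), to the facts that $c^{\pm}$ commutes with inversion and sends $Q_1$ to $A$ and $Q_1^{-1}$ to $A^{-1}$, and to the observation that both notions of label preservation already require type preservation. You are somewhat more explicit than the paper, for instance in checking that compression yields a valid $c$-coloured tree and that reversed edges land in $\operatorname{Col}_c(\tau(y))$ after rerooting.
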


\begin{proof}
Lemma~\ref{lem:local-reconstruction} shows that a vertex type together with a signed colour recovers the original syllable uniquely. Hence compression and lifting are mutually inverse. Since $c^{\pm}$ commutes with inversion, the intrinsic signed colour attached to an oriented edge is independent of the chosen root, and therefore compression commutes with rerooting.

Let $U$ be a subtree. A boundary syllable is direct precisely when its compressed label lies in $A$, and it is inverse precisely when its compressed label lies in $A^{-1}$. Hence the automata-factor and automata-image boundary conditions are preserved and reflected by compression.

Now let $\Phi:U\to V$ be an isomorphism of subtrees preserving vertex types and compressed labels. If $(x,y)$ is an oriented edge of $U$, then the original syllables on $(x,y)$ and $(\Phi(x),\Phi(y))$ both belong to $\Syl(\tau(x))$
and have the same signed colour. By Lemma~\ref{lem:local-reconstruction}, these syllables are equal. Thus $\Phi$ is automata-label-preserving before compression. The converse is immediate.

It follows that compression induces a bijection between the automata-induced self-overlaps of a rooted syllable tree and those of its compressed coloured tree, with the trivial overlap corresponding to the trivial overlap. Therefore the original tree is an automata-induced tree brick if and only if its compression is an automata-induced tree brick.
\end{proof}

The automaton itself transports along the same bijection. Define a compressed ranked alphabet by
\[
\Sigma^c_{\Lambda,m}:=\{[i;a_1,\ldots,a_m]:a_j\in\operatorname{Col}_c(i),\ \lambda_i(a_1)\prec\cdots\prec\lambda_i(a_m)\},
\]
and put $\Sigma^c_\Lambda=\coprod_m\Sigma^c_{\Lambda,m}$. Every symbol has a unique lift
\[
[i;a_1,\ldots,a_m]\longmapsto[i;\lambda_i(a_1),\ldots,\lambda_i(a_m)]\in\Sigma_{\Lambda,m}.
\]
Use the same states and final states as $\mathcal A_\Lambda$ and define each compressed transition by first lifting its input symbol and then applying the transition of $\mathcal A_\Lambda$.

\begin{corollary}[Compressed recognition]\label{cor:compressed-recognition}
The resulting deterministic finite bottom-up automaton $\mathcal A_{\Lambda,c}$ recognizes exactly the ranked encodings of $\Lambda$-admissible $c$-coloured rooted trees. Under the lifting bijection, its accepted trees correspond bijectively to $L(\mathcal A_\Lambda)$, and automata-induced brickness is preserved.
\end{corollary}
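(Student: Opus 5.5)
The plan is to reduce everything to Theorem~\ref{thm:recognition} and Theorem~\ref{thm:compression} by checking that lifting of symbols commutes with the ranked encodings. First I will set up the symbol-level dictionary. Lemma~\ref{lem:local-reconstruction} gives bijections $\lambda_i:\operatorname{Col}_c(i)\to\Syl(i)$. Hence the lifting map $\Sigma^c_{\Lambda,m}\to\Sigma_{\Lambda,m}$, $[i;a_1,\ldots,a_m]\mapsto[i;\lambda_i(a_1),\ldots,\lambda_i(a_m)]$, is a bijection for every $m$, with inverse given by applying $c^{\pm}$ entrywise. The ordering condition $\lambda_i(a_1)\prec\cdots\prec\lambda_i(a_m)$ in the definition of $\Sigma^c_{\Lambda,m}$ is exactly the ordering condition of $\Sigma_{\Lambda,m}$ transported through $\lambda_i$. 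Applying this symbolwise gives a rank-preserving bijection $\mathrm{Lift}:\mathsf T_{\Sigma^c_\Lambda}\to\mathsf T_{\Sigma_\Lambda}$ on ranked trees. For every $c$-coloured rooted tree $\bar{\mathcal S}$, the encoding satisfies $\mathrm{Lift}(\Enc(\bar{\mathcal S}))=\Enc(\text{lift of }\bar{\mathcal S})$. Here I must order the children of a vertex of type $i$ by the $\prec$-order of their lifted syllables, which is what the compressed alphabet forces.

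Second, I will show by induction on height that the state computed by $\mathcal A_{\Lambda,c}$ on a ranked tree $\mathbf s$ equals the state computed by $\mathcal A_\Lambda$ on $\mathrm{Lift}(\mathbf s)$. This is immediate, because each compressed transition is defined as the lifted symbol followed by $\delta$, and the state sets coincide. Since the final states also coincide, $\mathbf s\in L(\mathcal A_{\Lambda,c})$ if and only if $\mathrm{Lift}(\mathbf s)\in L(\mathcal A_\Lambda)$. By Theorem~\ref{thm:recognition}, the latter holds exactly when $\mathrm{Lift}(\mathbf s)$ encodes a $\Lambda$-admissible rooted syllable tree.

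Third, I will identify the accepted language. By the definition of admissibility for coloured trees together with the first step, this happens exactly when $\mathbf s$ encodes a $\Lambda$-admissible $c$-coloured rooted tree. One point needs care: a ranked tree over $\Sigma^c_\Lambda$ might fail to be the encoding of any $c$-coloured tree, because the type-compatibility condition $t(\lambda_{\tau(x)}(\bar\ell))=\tau(y)$ could fail. In that case $\mathrm{Lift}(\mathbf s)$ fails the test $i_j=t(\gamma_j)$ and is rejected, so such trees are correctly excluded. Restricting $\mathrm{Lift}$ to $L(\mathcal A_{\Lambda,c})$ then gives the bijection onto $L(\mathcal A_\Lambda)$. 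This restriction agrees with the lifting bijection of Theorem~\ref{thm:compression}, so brickness is preserved and reflected by that theorem.

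The main obstacle I expect is this bookkeeping between ranked trees and coloured trees: handling ranked trees that are not encodings, and ensuring the child ordering is canonical on both sides. I will address it by working symbolwise with $\mathrm{Lift}$ and letting the automaton's own type check exclude the non-encodings. Beyond that, the argument is routine.
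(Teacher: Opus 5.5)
Your proposal is correct and follows essentially the same route as the paper. The paper argues that each compressed transition is the original transition on the unique lifted symbol, uses induction on height to identify the run of $\mathcal A_{\Lambda,c}$ with the run of $\mathcal A_\Lambda$ on the lifted tree, and then applies Theorems~\ref{thm:recognition} and~\ref{thm:compression}. Your extra bookkeeping fills in details the paper leaves implicit: the rank-preserving bijection $\Sigma^c_{\Lambda,m}\to\Sigma_{\Lambda,m}$, the canonical ordering of children, and the rejection of ranked trees that fail type compatibility.
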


\begin{proof}
At every vertex the compressed transition is, by definition, the original transition on the unique lifted symbol. Induction on the height therefore identifies the complete run of $\mathcal A_{\Lambda,c}$ with the run of $\mathcal A_\Lambda$ on the lifted tree. Apply Theorems~\ref{thm:recognition} and~\ref{thm:compression}.
\end{proof}

\subsection{Optimal number of colours}

Set
\[
\Delta(Q):=\max\left\{\max_{i\in Q_0}|Q_1^+(i)|,\ \max_{i\in Q_0}|Q_1^-(i)|\right\}.
\]
Form the bipartite multigraph $B_Q$ with vertex classes $Q_0^{\mathrm{out}}$ and $Q_0^{\mathrm{in}}$, and replace every arrow $\alpha:i\to j$ by an edge joining $i^{\mathrm{out}}$ to $j^{\mathrm{in}}$. A local colouring of $Q$ is exactly a proper edge-colouring of $B_Q$. Moreover, $\Delta(B_Q)=\Delta(Q)$.

\begin{theorem}[Optimal alphabet compression]\label{thm:optimal-colouring}
There exists a local colouring $c:Q_1\to A$ with $|A|=\Delta(Q)$, and no local colouring uses fewer colours. Consequently the signed arrow alphabet $Q_1^{\pm}$ can be replaced, without changing tree data or tree bricks, by a signed colour alphabet of size $2\Delta(Q)$; among local colourings this size is optimal.
\end{theorem}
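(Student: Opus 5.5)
The plan is to reduce everything to K\"onig's line-colouring theorem via the bipartite multigraph $B_Q$ introduced just before the statement. The first step is to verify the dictionary claimed there. A map $c:Q_1\to A$ is a local colouring exactly when, for every $i\in Q_0$, arrows in $Q_1^+(i)$ receive distinct colours and arrows in $Q_1^-(i)$ receive distinct colours. In $B_Q$ the edges incident to $i^{\mathrm{out}}$ are precisely the arrows of $Q_1^+(i)$, and the edges incident to $j^{\mathrm{in}}$ are precisely those of $Q_1^-(j)$. Hence local colourings are exactly the proper edge-colourings of $B_Q$. Loops $\alpha:i\to i$ cause no difficulty, since they become ordinary edges $i^{\mathrm{out}}\!-\!i^{\mathrm{in}}$. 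For the same reason, $\deg i^{\mathrm{out}}=|Q_1^+(i)|$ and $\deg j^{\mathrm{in}}=|Q_1^-(j)|$, so $\Delta(B_Q)=\Delta(Q)$.

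For the lower bound, take a vertex of $B_Q$ of degree $\Delta(Q)$. Its incident edges must receive pairwise distinct colours, so any local colouring uses at least $\Delta(Q)$ colours.

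For existence, apply K\"onig's theorem: every bipartite multigraph has chromatic index equal to its maximum degree. This gives a proper edge-colouring of $B_Q$ with $\Delta(Q)$ colours. Definition~\ref{def:local-colouring} requires $c$ to be surjective. If some colour is unused, the colouring still needs exactly $\Delta(Q)$ colours: it must use at least $\Delta(Q)$ colours at a maximum-degree vertex and at most $\Delta(Q)$ are available, so all colours appear there. The only degenerate case is $Q_1=\varnothing$, where $\Delta(Q)=0$ and $A=\varnothing$ works.

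Finally, the signed alphabet is $A^{\pm}=A\sqcup A^{-1}$, of size $2\Delta(Q)$. Theorem~\ref{thm:compression} and Corollary~\ref{cor:compressed-recognition} show that replacing $Q_1^{\pm}$ by $A^{\pm}$ preserves tree data, recognition and brickness. Optimality among local colourings follows from the lower bound, because $|A^{\pm}|=2|A|$. The main substantive input is K\"onig's theorem, which I would cite rather than reprove. If a self-contained argument were wanted, I would use the standard alternating-path recolouring: add the edges one at a time, and when a new edge $uv$ has no common free colour, swap two colours along a maximal alternating path starting at $v$. Bipartiteness guarantees this path cannot end at $u$. The remaining steps are bookkeeping.
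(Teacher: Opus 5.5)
Your proposal is correct and follows essentially the same route as the paper: you identify local colourings with proper edge-colourings of $B_Q$, get the lower bound from a vertex of maximal in- or out-degree, and get existence from K\"onig's line-colouring theorem. Your extra remarks fill in small details the paper leaves implicit: surjectivity is automatic because all $\Delta(Q)$ colours must appear at a maximum-degree vertex, loops cause no trouble, and the case $Q_1=\varnothing$ is handled by $A=\varnothing$.
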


\begin{proof}
By K\"onig's line-colouring theorem for bipartite multigraphs \cite[Theorem~1.4.18]{LovaszPlummer}, $B_Q$ has a proper edge-colouring with $\Delta(B_Q)=\Delta(Q)$ colours. This is a local colouring of $Q$. Conversely, at a vertex attaining the maximal out-degree or in-degree, all incident arrows must receive distinct colours, so every local colouring uses at least $\Delta(Q)$ colours. The assertion for signed alphabets follows by adjoining formal inverses of the colours.
\end{proof}

\begin{example}[Compression to a binary signed alphabet]\label{ex:compression-cycle}
Let $Q$ be the directed $4$-cycle
\[
\begin{tikzcd}[column sep=large]
1 \arrow[r,"\alpha_1"] & 2 \arrow[r,"\alpha_2"] & 3 \arrow[r,"\alpha_3"] & 4 \arrow[lll,bend left=28,"\alpha_4"]
\end{tikzcd}
\]
and take $I=J^5$. Then $\Lambda=KQ/J^5$ is finite-dimensional and $\Delta(Q)=1$. Hence the constant colouring $c(\alpha_i)=a$ is local. Although the original signed alphabet has eight symbols $\alpha_i^{\pm1}$, the compressed signed alphabet is only $\{a,a^{-1}\}$. No ambiguity is introduced: at a vertex $i$, the direct colour $a$ lifts to the unique arrow leaving $i$, while $a^{-1}$ lifts to the inverse of the unique arrow entering $i$. Thus the vertex type supplies precisely the information lost by forgetting the arrow name.
\end{example}

\section{Is the language of tree bricks regular?}
\label{sec:regularity}

Theorem~\ref{thm:recognition} shows that the class of rooted tree data over $\Lambda$ is a regular tree language. Theorem~\ref{thm:brick-criterion} singles out the bricks inside this language by a global condition: there must be no non-trivial factor--image self-overlap.

For a $\Lambda$-admissible rooted syllable tree $\mathcal S$, write $(T_{\mathcal S},F_{\mathcal S})$ for the tree datum reconstructed by Theorem~\ref{thm:rooted-encoding}. Define
\[
\mathsf{Brick}_\Lambda:=\{\Enc(\mathcal S)\in L(\mathcal A_\Lambda):M(T_{\mathcal S},F_{\mathcal S})\text{ is a brick}\}.
\]
By Corollary~\ref{cor:rerooting} and Theorem~\ref{thm:brick-criterion}, membership is invariant under rerooting.

\begin{question}\label{que:regular-bricks}
Is $\mathsf{Brick}_\Lambda$ a regular tree language for every finite-dimensional zero-relation algebra $\Lambda$?
\end{question}

The automaton $\mathcal A_\Lambda$ only checks local compatibility together with forbidden paths of bounded length, so finite memory is immediate. Brickness is different: a witness to non-brickness consists of a factor subtree and an image subtree which may be arbitrarily large and may lie in distant branches, together with a label-preserving isomorphism between them. Thus a putative finite automaton for non-brickness would have to encode enough information about partial self-overlaps to know whether two such occurrences can be completed.

One possible route is to search for a finite quotient of the possible partial overlap types during a bottom-up run. If the non-brick language were regular, closure of regular tree languages under complementation would give regularity of $\mathsf{Brick}_\Lambda$ as well; see \cite{TATA}. Equivalently, one may ask whether the factor--image self-overlap condition admits a monadic second-order description on the ranked encodings, using the standard equivalence between recognizable finite tree languages and monadic second-order definability \cite{TATA}. The present paper leaves this as the natural next automata-theoretic problem.

\section*{Statement on the use of artificial intelligence.}
Generative artificial intelligence tools were used during the preparation of this manuscript. Their use included assistance with organizing the structure of the paper, developing and refining the exposition, reformulating definitions and statements, suggesting intermediate lemmas and proof strategies, checking the internal consistency of notation, producing and revising \LaTeX{} code, and discussing examples and possible directions related to the automata-theoretic formulation developed here. AI tools were also used as a conversational aid in exploring how the classical theory of tree modules and graph maps could be recast in the language of finite tree automata and in presenting the comparison between automata-induced tree bricks and the representation-theoretic notion of a brick. The mathematical content generated or suggested through these interactions was not accepted automatically. The author independently checked the definitions, theorem statements, proofs, examples, bibliographic references, and logical dependencies appearing in the final manuscript, and modified or discarded AI-generated material whenever necessary. Responsibility for the correctness of all mathematical arguments and for the final form and claims of the paper rests entirely with the author.

\bibliographystyle{alpha}
\bibliography{main}

\end{document}